\colorlet{cite}{LimeGreen!50!Green}
\tikzset{ 
  baseline=-2.3pt,
  text height=1.5ex, text depth=0.25ex,
  >=stealth,
  node distance=2cm,
  mid/.style={fill=white,inner sep=2.5pt},
}
\newcommand{\mn}{\mathfrak n }
\newcommand{\ma}{\mathfrak a }
\newtheoremstyle{mydef}
  {}		
  {}		
  {}		
  {}		
  {\scshape}	
  {. }		
  { }		
  {\thmname{#1}\thmnumber{ #2}\thmnote{ #3}}	
\theoremstyle{plain}	
\newtheorem{theorem}{Theorem} 
\newtheorem{proposition}[theorem]{Proposition}
\newtheorem*{theorem*}{Theorem}
\newtheorem*{proposition*}{Proposition}
\newtheorem{corollary}[theorem]{Corollary}
\theoremstyle{mydef} 
\newtheorem{definition}[theorem]{Definition}
\newtheorem{example}[theorem]{Example}
\newtheorem*{conjecture*}{Conjecture}
\theoremstyle{remark}
\newtheorem{remark}[theorem]{Remark}
\DeclareMathOperator{\tr}{tr}
\DeclareMathOperator{\Ad}{Ad}
\DeclareMathOperator{\ad}{ad}
\DeclareMathOperator{\htt}{ht}
\DeclareMathOperator{\Lie}{Lie}
\newtheorem*{lemma*}{Lemma}
\newtheorem*{corollary*}{Corollary}
\theoremstyle{definition}
\theoremstyle{remark}
\author{Elizabeth  Gasparim, Lino Grama {\tiny and} Carlos Varea}
\address{E.G. - Depto. Matem\'aticas, Univ. Cat\'olica del Norte, Antofagasta, Chile. 
L.G. - Instituto de Matemática, Unicamp, Brazil. 
C.V. -  Depto. Matem\'atica, Univ. Tecnológica Federal do Paraná, Brazil.
etgasparim@gmail.com, linograma@gmail.com  carlosbassanivarea@gmail.com}
\title[Infinitesimal $T$-duality as a source of $H$-flux]{The $\mathbf H$-flux  on flag manifolds generated by infinitesimal $\mathbf T$-duality}
\begin{document}

\begin{abstract}
We define a new  correspondence  for pairs $(\mathbb{F},H)$ formed by a flag manifold $\mathbb{F}$ together with an 
 $H$-flux on $\mathbb{F}$.
  Given its role within our correspondence,  infinitesimal $T$-duality may be viewed as a source of $H$-flux,
in the sense that it contributes towards taking  fluxless pairs $(\mathbb{F},0)$  to 
pairs $(\mathbb{F}^\vee, H^\vee)$ carrying nontrivial flux $H^\vee\neq 0$.  We also illustrate how our correspondence exchanges
complex structures with symplectic ones up to $B$-transformations.
\end{abstract}
\maketitle
\tableofcontents

\section{Motivation: generalized  complex geometry and $T$-duality}

The interaction between complex geometry and symplectic geometry motivated by the mirror symmetry program has been a significant source of challenging problems in differential geometry. The language of generalized complex structures first introduced by Hitchin \cite{H} has the property of encompassing both complex and symplectic geometry as particular cases. 
Therefore, the use of generalized complex geometry in mirror symmetry problems becomes quite natural. In the context
of generalized complex structures,  Cavalcanti and Gualtieri  \cite{CG1,CG2, G1,G2} employed the notion of $T$-duality 
(in the sense considered by Bouwknegt, Evslin, Hannabuss, Mathai, Rosenberg, Wu \cite{BEM1,BEM2, BHM1,BHM2, MR, MW}) to establish an isomorphism between Courant algebroids of two $T$-dual manifolds. Consequently, each generalized complex structure on a differential manifold $M$ has a corresponding 
generalized complex structure on the $T$-dual manifold $M^\vee$. In some specific cases, Cavalcanti and Gualtieri 
 demonstrate the occurrence of complex--symplectic duality which can be understood as a manifestation of 
 Strominger--Yau--Zaslow mirror symmetry \cite{SYZ}.

Del Barco, Grama, and Soriani \cite{BGS} extended the work of  Cavalcanti and Gualtieri \cite{CG1} to the  case of nilmanifolds. 
The fundamental  step in their construction was the introduction of infinitesimal $T$-duality, serving as an analog to $T$-duality at the level of the Lie algebra of a nilpotent group.

In this work, we propose a correspondence between flag manifolds endowed with 3-form fluxes (to which we refer as {\it flowing flags}), drawing inspiration from ideas presented in the 
literature of generalized complex structure and of $T$-duality such as the works  by Cavalcanti--Gualtieri,
 Bouwknegt--Hannabuss--Mathai, and del Barco--Grama--Soriani. We utilize the infinitesimal $T$-duality of nilpotent algebras to induce a correspondence between generalized flag manifolds with fluxes. This correspondence emerges from the nilpotent radicals of the parabolic subalgebras used in constructing flag manifolds. The resulting process gives rise to a new parabolic subalgebra and, consequently, a new  flag manifold together with  a flux. 

We thoroughly explore  geometric and algebraic (Lie-theoretical) aspects of this construction while also presenting various concrete situations where a phenomenon pertaining to mirror symmetry occurs, in the sense of an interchange of complex and symplectic structures between the corresponding flowing flag manifolds.

In the following section we  provide a detailed walkthrough of how our correspondence is constructed within the context of flag manifolds. To illustrate our approach, we first focus on the particular case of the flag manifold $\mathbb F(1,2)$, which represents complete flags in the three-dimensional complex space $\mathbb{C}^3$. We will then establish a correspondence  between this flag manifold and the complex projective space $\mathbb{CP}^3$ with a flux.
This basic example offers a concrete view of the practical implementation of our correspondence within the context of 
flowing flag manifolds, illustrating  the groundwork for the subsequent theoretical development and the proof of the theorems we will 
implement in the remainder of the paper.\\

\paragraph {\bf Acknowledgements:} We are grateful for the generosity of   Luiz A. B. San Martin 
for the continued flow of good ideas offered to us,
in particular for the ones that generated this work.  We thank Christian Saemann for useful suggestions. 
We thank the Abdus Salam International Centre for Theoretical Physics for supporting our meetings on Deformation 
Theory; our cooperation  started out on the first meeting
and this text was prepared for the upcoming second meeting. L. Grama was partially supported by S\~ao Paulo Research Foundation FAPESP grants 2018/13481-0, 2021/04003-0, 2021/04065-6.

\section{Our proposal: a new correspondence for flowing flags}
We use infinitesimal $T$-duality to define a new correspondence between flag manifolds in the following sense:
\begin{enumerate}
\item Starting with a pair $(\mathbb{F}={G}/{P},H) $ formed by a flag manifold and an $H$-flux, 
\item  our new correspondence proceeds via  infinitesimal $T$-duality to  take
   the nilradical $\mathfrak n$ of $\mathfrak p = \Lie(P)$ 
 to its dual $\mathfrak n^\vee$, next
 \item we choose a group $\mathfrak g^\vee$ of the same type as $\mathfrak g$, then
 \item we choose a parabolic subalgebra $\mathfrak p^\vee$ of $\mathfrak g^\vee$, then
 \item we exchange the original fluxes and  nilradicals,   generating
 a new flag manifold $\mathbb{F}^\vee={G^\vee}/{P^\vee}$ endowed with a flux $H^\vee$, and
 \item provide several examples where $H$ vanishes whereas $H^\vee$ is nontrivial, therefore  generating flux on $\mathbb{F}^\vee$ out of the fluxless flag $\mathbb{F}$.
\end{enumerate}

\begin{center}
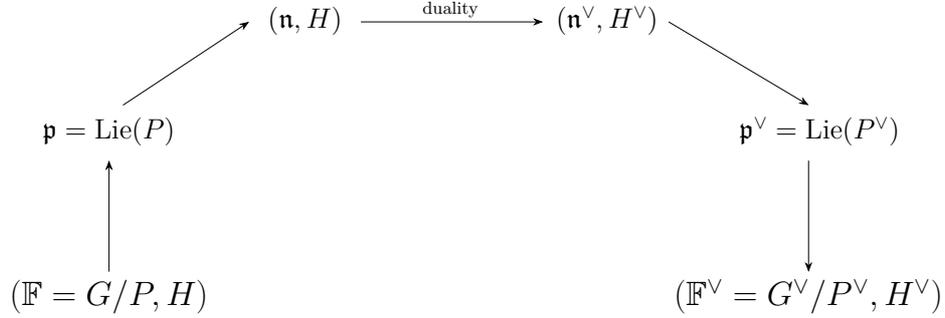
\begin{figure}[H]
\resizebox{1\textwidth}{!}{
\begin{tikzpicture}
\node [font=\LARGE] at (3.5,0.75) {$(\mathbb{F}=G/P, H)$};
\draw [ -Stealth] (3.5,1.25) -- (3.5,3.25);
\node [font=\Large] at (3.5,3.75) {$\mathfrak{p}=\Lie(P)$};
\draw [ -Stealth] (3.75,4.25) -- (6,5.75);
\node [font=\Large] at (7,5.75) {$(\mathfrak{n},H)$};
\draw [-Stealth] (8,5.75) -- (11.25,5.75);
\node [font=\Large] at (12.4,5.75) {$(\mathfrak{n}^\vee,H^\vee)$};
\draw [ -Stealth] (13.5,5.75) -- (16,4.25);
\node [font=\Large] at (16.2,3.75) {$\mathfrak{p}^\vee=\Lie(P^\vee)$};
\draw [ -Stealth] (16,3.25) -- (16,1.25);
\node [font=\LARGE] at (16,0.75) {$(\mathbb{F}^\vee=G^\vee/P^\vee, H^\vee)$};
\node [font=\footnotesize] at (9.6,6) {duality};
\end{tikzpicture}
}
\caption{The flowing flags correspondence}
\label{fig-def}
\end{figure}
\end{center}

Some remarks are worth mentioning:
\begin{itemize}
\item[$\iota.$] The dimensions of $\mathfrak{n}$ and $\mathfrak{n}^\vee$ are the same;
\item[$\iota \iota$.] The step from $\mathfrak{n}^\vee$ to $\mathfrak{p}^\vee$ is not unique. More than one choice of $\mathfrak{p}^\vee$ may complete the diagram.
\item[$\iota \iota \iota$.] $G^\vee$ has the same type as $G$ in the Cartan classification of simple Lie groups (i.e., $A_\ell, B_\ell, C_\ell$, and so on), and
\item[$\iota \nu$.] $\dim G^\vee/P^\vee = \dim \mathfrak{n}^\vee$.
\end{itemize}

\begin{example}\label{e1}
Consider the maximal flag manifold  of the group $G^\mathbb{C} = SL(3,\mathbb{C})$, that is, the homogeneous manifold $\mathbb F=SL(3,\mathbb{C})/P$, where $P$ is a minimal parabolic subgroup. Note that $\mathfrak{g}^\mathbb{C} = \mathfrak{sl}(3,\mathbb{C})$, so we have the following data:
\begin{itemize}
\item[$\iota$.] the system of simple roots: $\Sigma = \{ \alpha, \beta\}$,
\item[$\iota \iota$.] the set of positive roots: $\Pi^+ = \{ \alpha, \beta, \alpha+\beta\}$,  
\item[$\iota \iota \iota$.] the root spaces: $\mathfrak{g}_\alpha = \langle E_{12}\rangle$, $\mathfrak{g}_\beta = \langle E_{23}\rangle$, $\mathfrak{g}_{\alpha+\beta} = \langle E_{13}\rangle$, and 
\item[$\iota \nu$.] the Cartan subalgebra: $\mathfrak{h} = \langle E_{11} - E_{22}, E_{22}-E_{33}\rangle$.
\end{itemize}
Using  Malcev's notation, we write  $\mathfrak{n} = (0,0,-e^{12})$ and $\mathfrak{a} = ( e_3)$. We now consider the admissible triple  $(\mathfrak{n},\mathfrak{a}, H=0)$.
 Using Theorem \ref{nildual}, we see that the dual triple is $(\mathfrak{n}^\vee,\mathfrak{a}^\vee, H^\vee)$ where $\mathfrak{n}^\vee = (0,0,0)$ (that is, $\mathfrak{n}^\vee$ is abelian), $\mathfrak{a}^\vee = \mathbb{R}$ and $H^\vee = (-e^{123})$. 

\begin{center}
\begin{figure}[H]
\begin{tikzpicture}
\node at (1,0) {$H=0$};
\node at (8.5,0) {$H^\vee=-e^{123}$};
\node at (1,3) {$\mn=(0,0,-e^{12}),\, \ma =(e_3) $};
\node at (8.5,3) {$\mn^\vee=(0,0,0),\, \ma =(\tilde e_3) $};
\draw [->] (1.1,2.85) to [out=-57, in=135](8.9,0.35);
\draw [dashed, ->] (1.45,0.30) to [out=40,in=-130](8.5,2.85);

\end{tikzpicture}
\caption{Infinitesimal duality}
\end{figure}
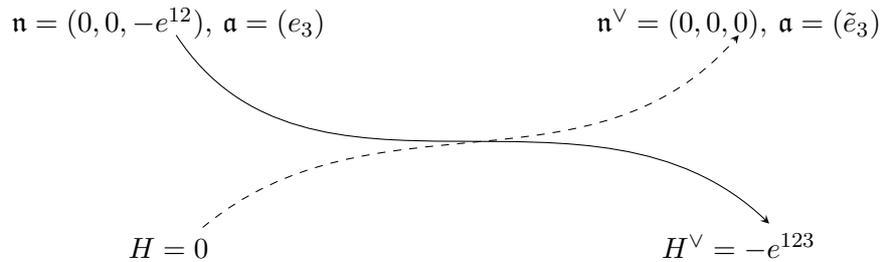
\end{center}

Since  $\mathfrak{n}^\vee$ is the nilradical of the parabolic subalgebra $\mathfrak{p}^\vee = \langle E_{14},E_{24},E_{34}\rangle$, we have that  $P_\Theta$ is the parabolic subgroup corresponding to the roots $\Theta = \{ \alpha_1,\alpha_2 \}$, and we obtain the flag manifold 
 $$\mathbb{F}^\vee = G^\vee/P^\vee = SL(4,\mathbb{C})/P_\Theta \simeq \mathbb{CP}^3.$$
Therefore, we conclude that $\mathbb{F}(1,2)$ corresponds to $(\mathbb{CP}^3,H^\vee)$.
\end{example}

\section{Extending infinitesimal T-duality to  flag manifolds }

\noindent {\bf T-duality.} Cavalcanti and Gualtieri \cite{CG2} established a link between $T$-duality and generalized complex structures. They realized $T$-duality as an isomorphism between the underlying orthogonal and Courant structures of two possibly topologically distinct manifolds. In certain cases such a duality can interchange complex and symplectic structures between the $T$-dual manifolds. 

The definition of topological $T$-duality used in \cite{CG2} for  torus bundles equipped with a closed $3$-form is the following:

\begin{definition}
Let $E$ and $E^\vee$ be principal fibre bundles over the same base $M$ with $k$-dimensional tori $T$ and $T^\vee$ as the fibres,
together  with closed invariant $3$-forms $H\in \Omega ^3 (E)$, $H^\vee \in \Omega^3 (E^\vee)$. Let $E\times _M E^\vee$ be the product bundle and consider the diagram
\[
\xymatrix{
& \ar[dl]_p E\times E^{\vee} \ar[dr]^{p^\vee}\\
E \ar[dr]& & \ar[dl] E^\vee\\
& M &
}
\]

We say that  $(E,H)$ and $(E^\vee ,H^\vee )$ are {\bf $T$-dual} if $p^\ast H-p^{\vee \ast}H^\vee = dF$,
for some $2$-form  $F\in \Omega ^2 (E\times_M E^\vee)$ that is $T\times_M T^\vee$-invariant and non-degenerate on the fibres. The product $E\times_M E^\vee$ is called the
{\bf correspondence space}.
\end{definition}

Given a pair of  $T$-dual torus bundles $(E,H)$ and $(E^\vee, H^\vee)$, Cavalcanti and Gualtieri  \cite{CG2} defined an isomorphism between the space of invariant sections 
\[
\varphi \colon (TE\oplus T^\ast E)/T \rightarrow (TE^\vee \oplus T^\ast E^\vee)/T^\vee.
\] 
Using the $2$-form $F$ and the correspondence space of $T$-dual pairs, this isomorphism $\varphi$ is given explicitly by
\[
\varphi (X+ \xi) = p_{\ast} ^\vee (\widehat{X}) + p^\ast \xi - F(\widehat{X}),
\]
where $\widehat{X}$ is the unique lift of $X$ to $E\times _M E^\vee$ such that $p^\ast \xi - F(\widehat{X})$ is a basic $1$-form.
Such an isomorphism preserves the natural bilinear forms, the Courant brackets twisted by the $3$-forms $H$ and $H^\vee$,
and   exchanges $T$-invariant generalized complex structures between $E$ and $E^\vee$.\\

$T$-duality has been studied in several contexts, in particular, del Barco, Grama, and Soriani \cite{BGS} studied $T$-duality in the context of nilmanifolds, that is,   homogeneous compact manifolds which are quotients of  nilpotent Lie groups
by a discrete cocompact subgroup (a lattice).  Using the structure of torus bundle naturally 
 occurring on nilmanifolds,  \cite{BGS} explored an infinitesimal version of the $T$-duality on them, 
 which we now summarize.

Consider a pair of Lie algebras $\mathfrak{g}$ and $\mathfrak{g}^\vee$ that are isomorphic, up to  quotienting by abelian ideals, that is, there are abelian ideals $\mathfrak{a}$ and $\mathfrak{a}^\vee$ such that $\mathfrak{g}/\mathfrak{a} \simeq \mathfrak{g}^\vee /\mathfrak{a}^\vee$. Let us denote by $\mathfrak{n}$ the quotient Lie algebra and by $q\colon \mathfrak{g}\rightarrow \mathfrak{n}$ and $q^\vee \colon \mathfrak{g}^\vee \rightarrow \mathfrak{n}$ the quotient maps.

The subset $\mathfrak{c}$ of $\mathfrak{g}\oplus \mathfrak{g}^\vee$, defined by
\[
\mathfrak{c} = \{ (x,y)\in \mathfrak{g}\oplus \mathfrak{g}^\vee \ | \ q(x) = q^\vee (y)\}
\]
is a Lie algebra and the following diagram is commutative
\[
\xymatrix{
& \ar[dl]_p \mathfrak{c} \ar[dr]^{p^\vee}\\
\mathfrak{g} \ar[dr]_q & & \ar[dl]^{q^\vee} \mathfrak{g}^\vee\\
& \mathfrak{n} &
}
\]
where $p$ and $p^\vee$ are the projections over the first and second component, respectively. The Lie subalgebras $\mathfrak{k} = \{ (x,0)\in \mathfrak{c} \ | \ x\in \mathfrak{a}\}$ and $\mathfrak{k}^\vee = \{ (0,y)\in \mathfrak{c} \ | \ y\in \mathfrak{a}^\vee\}$ are abelian ideals of $\mathfrak{c}$. In particular, $\mathfrak{c}/\mathfrak{k}^\vee \simeq \mathfrak{g}$ and $\mathfrak{c}/\mathfrak{k} \simeq \mathfrak{g}^\vee$.

\begin{definition}Let $\mathfrak{g}$ be a Lie algebra together with a closed $3$-form $H$. Let $\mathfrak{a}$  be an abelian ideal of $\mathfrak{g}$, we say that the triple $(\mathfrak{g}, \mathfrak{a}, H)$ is {\bf admissible} if $H(x,y,\cdot ) = 0$ for all $x,y \in \mathfrak{a}$.
\end{definition}

\begin{definition}
For $\mathfrak{a} =\Lie(T)$, two admissible triples $(\mathfrak{g}, \mathfrak{a}, H)$ and $(\mathfrak{g}^\vee, \mathfrak{a}^\vee, H^\vee)$ are said to be {\bf infinitesimally $T$-dual} if $\mathfrak{g}/\mathfrak{a} \simeq \mathfrak{g}^\vee/ \mathfrak{a}^\vee$ and there exists a $2$-form $F$ in $\mathfrak{c}$ which is non-degenerate in the fibres such that $p^\ast H - p^{\vee \ast} H^\vee = dF$. 
\end{definition}

Using this definition,  the following theorem about the existence and uniqueness of dual triples  proved in \cite{BGS}, shows 
in addition, 
 how to construct the dual triple.

\begin{theorem}\label{nildual}\cite[Thm.\thinspace 3.4]{BGS}
Let $(\mathfrak{g}, \mathfrak{a}, H)$ be an admissible triple with $\mathfrak{a}$ a central ideal and let $\{ x_1,\cdots, x_m\}$ be a basis of $\mathfrak{a}$. Let $\mathfrak{a}^\vee = \mathbb{R}^m$ and define
\begin{itemize}
\item $\Psi ^\vee \colon \mathfrak{n}\times \mathfrak{n} \rightarrow \mathfrak{a}^\vee$ given by $\Psi ^\vee = (\iota_{x_1}H, \iota_{x_2}H,\cdots , \iota_{x_m}H)$,

\item $\mathfrak{g}^\vee = (\mathfrak{g}/\mathfrak{a})_{\Psi ^\vee}$ and

\item $H^\vee = \sum_{k=1} ^m z^k \wedge dx^k + \delta$ where $\{ z_1,\cdots , z_m\}$ is a basis of $\mathfrak{a}^\vee$ and $\delta$ is the basic component of $H$.
\end{itemize}
Then $(\mathfrak{g}^\vee, \mathfrak{a}^\vee, H^\vee)$ is an admissible triple and is dual to $(\mathfrak{g}, \mathfrak{a}, H)$.

Conversely, if $(\mathfrak{g}^\vee, \mathfrak{a}^\vee, H^\vee)$ is dual to $(\mathfrak{g}, \mathfrak{a}, H)$, then there exist a basis $\{ x_1,\cdots , x_m\}$ of $\mathfrak{a}$ and a basis $\{z_1,\cdots ,z_m\}$ of $\mathfrak{a}^\vee$ such that the formulas above hold.
\end{theorem}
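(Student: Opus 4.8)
The plan is to exploit the fact that, since $\mathfrak{a}$ is a \emph{central} ideal, both $\mathfrak{g}$ and the candidate $\mathfrak{g}^\vee$ are central extensions of the common quotient $\mathfrak{n}=\mathfrak{g}/\mathfrak{a}$, and to run the whole argument inside the Chevalley--Eilenberg complexes, transporting everything to the correspondence algebra $\mathfrak{c}$. First I would fix a basis $\{x_1,\dots,x_m,y_1,\dots,y_p\}$ of $\mathfrak{g}$ with the $x_i$ spanning $\mathfrak{a}$ and the $y_j$ projecting to a basis of $\mathfrak{n}$, with dual $1$-forms $x^k,y^j$. Because $\mathfrak{a}$ is central, the only brackets feeding $dx^k$ come from the $\mathfrak{n}$-directions, so $d_{\mathfrak{g}}x^k=-\Psi^k$, where $\Psi=(\Psi^1,\dots,\Psi^m)$ is the defining $2$-cocycle of the extension $\mathfrak{g}$. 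Admissibility, $\iota_{x_i}\iota_{x_j}H=0$, says each $\eta_k\ce\iota_{x_k}H$ is horizontal, so $H$ decomposes cleanly as $H=\sum_k x^k\wedge\eta_k+\delta$ with $\delta$ the basic part, and $\Psi^\vee=(\eta_1,\dots,\eta_m)$.

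Next I would extract the content of $dH=0$ by sorting the identity according to the number of $x^k$ factors. The part linear in $x^k$ gives $d_{\mathfrak{n}}\eta_k=0$ for every $k$, which is precisely the cocycle condition making $\Psi^\vee$ a Lie-algebra $2$-cocycle; this is what guarantees that $\mathfrak{g}^\vee=(\mathfrak{n})_{\Psi^\vee}$ is a genuine Lie algebra, and since $H^\vee=\sum_k z^k\wedge dx^k+\delta$ carries at most one $z$-factor, admissibility of $(\mathfrak{g}^\vee,\mathfrak{a}^\vee,H^\vee)$ is immediate. The purely horizontal part of $dH=0$ records the one compatibility I will need later, namely $d_{\mathfrak{n}}\delta=\sum_k\Psi^k\wedge\eta_k$.

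The core of the forward direction is the comparison on $\mathfrak{c}$. Identifying $\mathfrak{c}\cong\mathfrak{n}\oplus\mathfrak{a}\oplus\mathfrak{a}^\vee$, the bracket is $([u,v]_{\mathfrak{n}},\Psi(u,v),\Psi^\vee(u,v))$, so on $\mathfrak{c}$ one has $dx^k=-\Psi^k$ and $dz^k=-\eta_k$. I would take the fibrewise pairing $F=\sum_k x^k\wedge z^k$, which is manifestly nondegenerate on the fibre $\mathfrak{a}\oplus\mathfrak{a}^\vee$, and compute $dF=-\sum_k\Psi^k\wedge z^k+\sum_k x^k\wedge\eta_k$. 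Since $p^\ast H=\sum_k x^k\wedge\eta_k+\delta$ has no $z$-terms, all the $z$-terms must be supplied by $p^{\vee\ast}H^\vee$; subtracting then yields $p^\ast H-p^{\vee\ast}H^\vee=dF$ exactly when $p^{\vee\ast}H^\vee=\delta+\sum_k z^k\wedge\Psi^k$, i.e. when $H^\vee=\sum_k z^k\wedge dx^k+\delta$ with $dx^k$ read as the cocycle $2$-form $\Psi^k$ (the sign being a matter of the extension convention). This is the asserted formula, and it exhibits the required nondegenerate $F$, so the two triples are infinitesimally $T$-dual; closedness $dH^\vee=0$ then comes for free by applying $d$ to $p^\ast H-p^{\vee\ast}H^\vee=dF$ and using injectivity of $p^{\vee\ast}$.

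For the converse I would start from an abstract dual pair and use the nondegeneracy of $F$ on the fibre $\mathfrak{a}\oplus\mathfrak{a}^\vee$ to manufacture the bases: pick any basis $\{x_k\}$ of $\mathfrak{a}$ and let $\{z_k\}$ be the basis of $\mathfrak{a}^\vee$ that is $F$-dual to it. Feeding $p^\ast H-p^{\vee\ast}H^\vee=dF$ back through the same degree-counting, the $x^k\wedge z^k$-component of $dF$ forces the mixed term of $p^\ast H$ to be $\sum_k x^k\wedge\iota_{x_k}H$, identifying the cocycle of $\mathfrak{g}^\vee$ with $\Psi^\vee=(\iota_{x_1}H,\dots,\iota_{x_m}H)$, while the horizontal components match the basic parts, recovering the stated formulas. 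I expect the main obstacle to be precisely this converse bookkeeping: a priori $F$ may carry $\mathfrak{a}\wedge\mathfrak{a}$ and $\mathfrak{a}^\vee\wedge\mathfrak{a}^\vee$ pieces, so the delicate step is to normalize (gauge) $F$ to the clean pairing $\sum_k x^k\wedge z^k$ and to verify that this modification shifts $H$ and $H^\vee$ only by exact or otherwise irrelevant terms, so the reconstructed formulas are the honest ones. The forward direction, by contrast, reduces to the single wedge computation of $dF$ together with the two consequences of $dH=0$.
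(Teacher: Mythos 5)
First, a structural point: the paper does not prove this theorem at all --- it is quoted verbatim from \cite[Thm.\thinspace 3.4]{BGS} --- so there is no internal proof to compare yours against; the relevant benchmark is the argument in \cite{BGS}, and your forward direction essentially reproduces it. That half is correct and complete. Centrality of $\mathfrak{a}$ gives $d_{\mathfrak g}x^k=-\Psi^k$ basic; admissibility gives the clean decomposition $H=\sum_k x^k\wedge\eta_k+\delta$ with $\eta_k=\iota_{x_k}H$ basic; sorting $dH=0$ by fibre degree does yield $d_{\mathfrak n}\eta_k=0$ (the cocycle condition making $\mathfrak g^\vee=(\mathfrak g/\mathfrak a)_{\Psi^\vee}$ a Lie algebra) together with $d_{\mathfrak n}\delta=\sum_k\Psi^k\wedge\eta_k$; and on $\mathfrak c\cong\mathfrak n\oplus\mathfrak a\oplus\mathfrak a^\vee$ the form $F=\sum_k x^k\wedge z^k$ satisfies $dF=-\sum_k\Psi^k\wedge z^k+\sum_k x^k\wedge\eta_k=p^\ast H-p^{\vee\ast}H^\vee$ (up to the overall sign of $F$, as you note). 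Admissibility and closedness of $H^\vee$ follow as you say.

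The converse contains a genuine gap, which you have correctly located but not closed. The definition only provides \emph{some} $F$ nondegenerate on the fibre $\mathfrak a\oplus\mathfrak a^\vee$; a priori $F$ also has $\mathfrak a\wedge\mathfrak a$, $\mathfrak a^\vee\wedge\mathfrak a^\vee$ and fibre--horizontal components. To see concretely why this matters, contract the duality relation with a single fibre vector: since $\mathfrak k\oplus\mathfrak k^\vee$ is central in $\mathfrak c$, the Cartan formula gives $\iota_{x_i}dF=-d(\iota_{x_i}F)$, while $\iota_{x_i}(p^\ast H-p^{\vee\ast}H^\vee)=p^\ast(\iota_{x_i}H)$. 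Writing $\iota_{x_i}F=\sum_j b_{ij}x^j+\sum_j a_{ij}z^j+\mu_i$ with $\mu_i$ horizontal, this yields $\sum_j a_{ij}\Psi^{\vee j}=\iota_{x_i}H-\sum_j b_{ij}\Psi^j+d_{\mathfrak n}\mu_i$. Even after choosing $\{z_k\}$ to be $F$-dual to $\{x_k\}$ (so $a_{ij}=\delta_{ij}$), the cocycle of $\mathfrak g^\vee$ agrees with $(\iota_{x_1}H,\dots,\iota_{x_m}H)$ only modulo a multiple of the cocycle $\Psi$ of $\mathfrak g$ and a coboundary; and one cannot simply subtract the $x\wedge x$ block from $F$, since $x^i\wedge x^j$ is not closed on $\mathfrak c$, so that modification changes $dF$. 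The converse therefore stands or falls on showing that $F(x_i,x_j)$, $F(z_i,z_j)$ and the fibre--horizontal part of $F$ can be forced to vanish (or shown to contribute nothing to the relevant components) --- this is exactly the normalization you flag as ``the delicate step,'' and until it is carried out the identification of $\Psi^\vee$ with $(\iota_{x_1}H,\dots,\iota_{x_m}H)$, which is the entire content of the converse, is not established.
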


\noindent {\bf Flag manifolds.} 
Let $\mathfrak{g}^\mathbb{C}$ be a complex semi-simple Lie algebra and consider 
 a Cartan subalgebra $\mathfrak{h}$ of $\mathfrak{g}^\mathbb{C}$.   The set $\Pi$ 
 of all roots associated to the pair $(\mathfrak{g}^\mathbb{C}, \mathfrak{h})$, decomposes as  
\[
\mathfrak{g}^\mathbb{C} = \mathfrak{h} \oplus \sum_{\alpha \in \Pi} \mathfrak{g}_\alpha
\]
where $\mathfrak{g}_\alpha = \{ X \in \mathfrak{g}^\mathbb{C} : \forall H \in \mathfrak{h}, [H,X] = \alpha (H) X\}$ is the root space associated to the root $\alpha$.

The {\bf Cartan--Killing form} is defined by
\[
\langle X,Y \rangle = \tr (\ad (X) \ad (Y))
\]
and its restriction to $\mathfrak{h}$ is non-degenerate. Given a root $\alpha \in \mathfrak{h}^\ast$ we can define $H_\alpha$ by $\alpha (\cdot ) = \langle H_\alpha , \cdot \rangle$ and denote $\mathfrak{h}_\mathbb{R} = \textnormal{span}_\mathbb{R} \{H_\alpha : \alpha \in \Pi\}$.

Let us fix a Weyl basis of $\mathfrak{g}^\mathbb{C}$ which amounts to giving $X_\alpha \in \mathfrak{g}_\alpha$ such that $\langle X_\alpha , X_{-\alpha}\rangle = 1$ and 
\[
[X_\alpha , X_\beta] = \left\lbrace  \begin{array}{ll}
m_{\alpha,\beta} X_{\alpha+\beta},& \ \textnormal{if } \alpha+\beta \in \Pi \\
0,& \ \textnormal{if } \alpha+\beta \notin \Pi
\end{array}\right.
\]
with $m_{\alpha,\beta} \in \mathbb{R}$ satisfying $m_{\alpha,\beta} = -m_{-\alpha,-\beta}$.

Let $\Pi^+ \subset \Pi$ be a choice of positive roots and $\Sigma$ be the corresponding simple root system. Given a subset $\Theta \subset \Sigma$, we will denote by $\langle \Theta \rangle$ the set of roots spanned by $\Theta$, $\Pi_M = \Pi \backslash \langle \Theta \rangle$ the set of complementary roots and $\Pi_M ^+$ the set of complementary positive roots.

Let
\[
\mathfrak{p}_\Theta = \mathfrak{h} \oplus \sum_{\alpha \in \langle \Theta \rangle} \mathfrak{g}_\alpha \oplus \sum_{\beta \in \Pi_M ^+} \mathfrak{g}_\beta
\]
be the parabolic subalgebra of $\mathfrak{g}^\mathbb{C}$ determined by $\Theta$.

\begin{definition} The {\bf flag manifold} $\mathbb{F}_\Theta$ is defined as the homogeneous space
\[
\mathbb{F}_\Theta = G^\mathbb{C}/P_{\Theta}
\]
where $G^\mathbb{C}$ is a complex connected Lie group with Lie algebra $\mathfrak{g}^\mathbb{C}$ and $P_\Theta$ is the normalizer of $\mathfrak{p}_\Theta$ in $G^\mathbb{C}$.
\end{definition}

We will use the simplified notation $\mathbb{F}$ omitting the subscript.

Let $\mathfrak{g}$ be a compact real form of $\mathfrak{g}^\mathbb{C}$. We have 
\[
\mathfrak{g} = \textnormal{span}_\mathbb{R} \{ i\mathfrak{h}_\mathbb{R}, A_\alpha, iS_\alpha : \alpha \in \Pi\},
\]
where $A_\alpha = X_\alpha - X_{-\alpha}$ and $S_\alpha = X_\alpha + X_{-\alpha}$.

Denote by $G$ the compact real form of $G^\mathbb{C}$ with $\Lie(G) = \mathfrak{g}$ and let $\mathfrak{k}_\Theta$ be the Lie algebra of $K_\Theta := P_\Theta \cap G$. We have
\[
\mathfrak{k}_\Theta ^\mathbb{C} = \mathfrak{h} \oplus \sum _{\alpha\in \langle \Theta \rangle} \mathfrak{g}_\alpha
\]
where $\mathfrak{k}_\Theta ^\mathbb{C}$ denotes the complexification of the real Lie algebra $\mathfrak{k}_\Theta = \mathfrak{g}\cap \mathfrak{p}_\Theta$.

The Lie group $G$ acts transitively on $\mathbb{F}$ and we have
\[
\mathbb{F} = G^\mathbb{C}/P_{\Theta} = G/(P_\Theta \cap G) = G/K_\Theta.
\]
When $\Theta = \emptyset$ the flag manifold $\mathbb{F} = G^\mathbb{C}/P = G/T$ is called {\bf maximal}, where $T = G\cap P$ is a maximal torus of $G$.\\

\noindent {\bf Isotropy representation.}
An important tool for the study of invariant tensors on the flag manifold $G/K_\Theta$ is the {isotropy representation}. 
Let us recall its definition. For $a\in G$, let $\tau_a:G/K_\Theta \to G/K_\Theta$
 be the left translation induced by the $G$-action on $G/K_\Theta$, that is, $\tau_a(gK_\Theta)=agK_\Theta$. We define the origin $o$ of $G/K_\Theta$ as the trivial coset, $o=eK_\Theta$. Note that for every $k\in K_\Theta$ we have $\tau_k(o)=o$.  
\begin{definition}
The {\bf isotropy representation} of the flag manifold $G/K_\Theta$ is the homomorphism 
\begin{equation}
\rho:K_\Theta\to GL(T_o (G/K_\Theta))
\end{equation}
defined by $\rho(k)(X)=(d\tau_k)_o(X)$, $k\in K_\Theta$ and $X\in T_o (G/K_\Theta) $.
\end{definition}


\begin{definition} The homogeneous space $G/K_\Theta$ is called {\bf reductive} if the Lie algebra of $G$ decomposes into $\mathfrak{g}= \mathfrak{k}_\Theta \oplus \mathfrak{m}$, where $\mathfrak{k}_\Theta=\Lie(K_\Theta)$, with $\Ad(k)\mathfrak{m}\subset \mathfrak{m}$, for all $k\in K_\Theta$. 

In such a case one can identify $T_o (G/K_\Theta)$ with $\mathfrak{m}$. 
\end{definition}

\begin{proposition}\cite{arv01}
The isotropy representation of the reductive homogeneous space $G/K_\Theta$ is equivalent (as representation) to $$\Ad\mid_{K_\Theta}:\mathfrak{m}\longrightarrow\mathfrak{m}.$$
\end{proposition}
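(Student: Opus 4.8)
The plan is to exhibit an explicit intertwiner between the two representations by differentiating the canonical projection $\pi\colon G\to G/K_\Theta$, $\pi(g)=gK_\Theta$, at the identity. Since $K_\Theta$ is precisely the fibre $\pi^{-1}(o)$, the differential $(d\pi)_e\colon\mathfrak g\to T_o(G/K_\Theta)$ has kernel $\mathfrak k_\Theta$, so by the reductive decomposition $\mathfrak g=\mathfrak k_\Theta\oplus\mathfrak m$ its restriction $\phi\ce (d\pi)_e|_{\mathfrak m}\colon\mathfrak m\to T_o(G/K_\Theta)$ is a linear isomorphism. This $\phi$ is exactly the identification of $T_o(G/K_\Theta)$ with $\mathfrak m$ mentioned above, and it is this map that I would show intertwines $\Ad|_{K_\Theta}$ with the isotropy representation $\rho$.

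First I would establish the equivariance of $\pi$ with respect to conjugation on $G$ and left translation on $G/K_\Theta$. For $k\in K_\Theta$ let $C_k\colon G\to G$ denote the inner automorphism $C_k(g)=kgk^{-1}$. Because $k^{-1}\in K_\Theta$ we have $k^{-1}K_\Theta=K_\Theta$, and hence for every $g\in G$
\[
\pi(C_k(g))=kgk^{-1}K_\Theta=kgK_\Theta=\tau_k(\pi(g)),
\]
so that $\pi\circ C_k=\tau_k\circ\pi$.

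Next I would differentiate this identity at $e\in G$. Using $C_k(e)=e$, $\pi(e)=o$, the standard fact $(dC_k)_e=\Ad(k)$, and $(d\tau_k)_o=\rho(k)$, the chain rule yields
\[
(d\pi)_e\circ\Ad(k)=\rho(k)\circ(d\pi)_e
\]
as maps $\mathfrak g\to T_o(G/K_\Theta)$. Restricting to $\mathfrak m$ and invoking reductivity, which guarantees $\Ad(k)\mathfrak m\subset\mathfrak m$ so that both sides are well defined on $\mathfrak m$, gives $\phi\circ(\Ad(k)|_{\mathfrak m})=\rho(k)\circ\phi$ for all $k\in K_\Theta$. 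Since $\phi$ is a linear isomorphism, this is precisely the statement that $\phi$ is an equivalence of representations between $\Ad|_{K_\Theta}\colon\mathfrak m\to\mathfrak m$ and $\rho$, which completes the argument.

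The computation is entirely formal once the equivariance $\pi\circ C_k=\tau_k\circ\pi$ is in hand, so I do not expect a serious obstacle; the only points requiring care are that this equivariance genuinely uses $K_\Theta$ being a subgroup (so that $k^{-1}K_\Theta=K_\Theta$) rather than merely a subset, and that the restriction to $\mathfrak m$ is legitimate precisely because the splitting $\mathfrak g=\mathfrak k_\Theta\oplus\mathfrak m$ is $\Ad(K_\Theta)$-invariant. Thus the reductive hypothesis enters in two essential places: to make $\phi$ an isomorphism onto $T_o(G/K_\Theta)$, and to make $\Ad|_{K_\Theta}$ restrict to an action on $\mathfrak m$.
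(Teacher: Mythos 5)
Your argument is correct and is precisely the standard proof of this fact: the paper itself gives no proof but cites Arvanitoyeorgos \cite{arv01}, where the result is established exactly by differentiating the equivariance $\pi\circ C_k=\tau_k\circ\pi$ at the identity and using the $\Ad(K_\Theta)$-invariant splitting $\mathfrak g=\mathfrak k_\Theta\oplus\mathfrak m$ to identify $T_o(G/K_\Theta)$ with $\mathfrak m$. Both places where you invoke reductivity are exactly where it is needed, so there is nothing to add.
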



In the case of flag manifolds the isotropy representation decomposes $\mathfrak{m}$ into irreducible components (see \cite{arv01}), that is, 
$$
\mathfrak{m}=\mathfrak{m}_{1}\oplus\mathfrak{m}_{2}\oplus\cdots\oplus\mathfrak{m}_{n},
$$
where each component $\mathfrak{m}_{i}$ satisfies $\Ad(K_\Theta)(\mathfrak{m}_{i})\subset \mathfrak{m}_{i}$. 
Moreover, each component $\mathfrak{m}_{i}$ is irreducible, that is, the only invariant subspaces of $\mathfrak{m}_{i}$ by $\Ad(K_\Theta)|_{\mathfrak{m}_{i}}$ are the trivial subspaces.

\begin{definition}
 We call the subspaces $\mathfrak{m}_{i}$ of $\mathfrak{m}$  the {\bf isotropic summands} of the isotropy representation
 of $G/K_\Theta$.
\end{definition}

\section{Flux generated by the flowing flags correspondence}

\begin{definition}
A {\bf flowing flag} is a pair $(\mathbb{F},H)$ consisting of a flag manifold $\mathbb{F} = G/P$ together with an invariant $3$-form $H$.
\end{definition}

We now define a correspondence in the set of flowing flags.
To each flag manifold we can associate a unique nilpotent Lie algebra. 
Indeed, since the flag manifolds $\mathbb{F} = G/P$ are in one-to-one correspondence with the parabolic subalgebras $\mathfrak{p}$ of $\mathfrak{g}$, 
it suffices to consider  the nilradical $\mathfrak{n}$ of $\mathfrak{p}$.

\begin{definition}\label{dualflag}
Consider the flowing flag $(\mathbb{F}=G/P,H)$ and let $\mathfrak{n}$ be the nilradical of the parabolic subalgebra $\mathfrak{p}=\Lie(P)$. We say that $(\mathbb{F},H)$ {\bf corresponds} to $(\mathbb{F}^\vee,H^\vee)$ if:  
\begin{itemize}
\item $(\mathfrak{n}^\vee,\mathfrak{a}^\vee, H^\vee)$ is the infinitesimal $T$-dual of $(\mathfrak{n}, \mathfrak{a},H)$, so that $\mathfrak{n}/\mathfrak{a} \simeq  \mathfrak{n}^\vee / \mathfrak{a}^\vee$ (where $\mathfrak{a}$ and $\mathfrak{a}^\vee$ are abelian),
\item $P^\vee$ is the parabolic subgroup associated to the parabolic subalgebra $\mathfrak{p}^\vee$ obtained from the nilradical $\mathfrak{n}^\vee$, and
\item $\mathbb{F}^\vee$ is determined by the parabolic subgroup $P^\vee < G^\vee$, with $G^\vee$ 
of the same type as $G$, with $\dim \mathbb{F}^\vee = \dim \mathbb{F}$.
\end{itemize}
\end{definition}

We now illustrate this definition  with examples.
We will use Malcev's notation, which works as follows. Denote the 2-form $e_i\wedge e_j$ by $e^{ij}$. So that,
for example,  a $6$-tuple $(0,0,0,e^{12}, e^{13},e^{14})$ describes the Lie algebra with dual 
generated by $e_1,\cdots, e_6$ such 
that $de_1=de_2=de_3=0$, $de_4= e_1\wedge e_2$, $de_5= e_1\wedge e_3$, and $de_5= e_1\wedge e_4$.

\begin{example}(A fixed point of the correspondence)\label{ex-sl3} Consider the maximal flag manifold  of the group $G^\mathbb{C} = SL(3,\mathbb{C})$, that is, the homogeneous manifold $\mathbb F=SL(3,\mathbb{C})/P$, where $P$ is a minimal parabolic subgroup. Note that $\mathfrak{g}^\mathbb{C} = \mathfrak{sl}(3,\mathbb{C})$, so we have the following data:
\begin{itemize}
\item[$\iota.$] the system of simple roots: $\Sigma = \{ \alpha, \beta\}$,
\item[$\iota\iota.$] the set of positive roots: $\Pi^+ = \{ \alpha, \beta, \alpha+\beta\}$,  
\item[$\iota\iota\iota.$] the root spaces: $\mathfrak{g}_\alpha = \langle E_{12}\rangle$, $\mathfrak{g}_\beta = \langle E_{23}\rangle$, $\mathfrak{g}_{\alpha+\beta} = \langle E_{13}\rangle$, and 
\item[$\iota\nu.$] the Cartan subalgebra: $\mathfrak{h} = \langle E_{11} - E_{22}, E_{22}-E_{33}\rangle$.
\end{itemize} 

Observe that we have the following nilpotent Lie algebra $\mathfrak{n} = \sum_{\gamma \in \Pi^+} \mathfrak{g}_{\gamma}$. Since $[E_{12},E_{23}] = E_{13}$ and all other brackets are zero, we have that $\mathfrak{a} = \langle E_{13} \rangle$ is a central ideal of $\mathfrak{n}$. 

Using  Malcev's notation, we write  $\mathfrak{n} = (0,0,-e^{12})$ and $\mathfrak{a} = ( e_3)$. 
Then $(\mathfrak{n},\mathfrak{a}, H= e^{123})$ is an admissible triple. Following Theorem \ref{nildual}  the dual triple 
$(\mathfrak{n}^\vee,\mathfrak{a}^\vee, H^\vee)$ is given by $\mathfrak{n}^\vee = (0,0,e^{12})$, 
$\mathfrak{a} = \mathbb{R}$ and $H^\vee = (-e^{123})$. Note that, by  Definition \ref{dualflag}, $\mathfrak{n}$ is self-dual and therefore $(\mathbb{F},H)$ corresponds to itself:


\begin{center}
\begin{figure}[H]
\begin{tikzpicture}
\node at (1,0) {$H=e^{123}$};
\node at (8.5,0) {$H^\vee=-e^{123}$};
\node at (1,3) {$\mn=(0,0,-e^{12}),\, \ma =(e_3) $};
\node at (8.5,3) {$\mn^\vee=(0,0,e^{12}),\, \ma =(\tilde e_3) $};
\draw [->] (1.1,2.85) to [out=-45, in=140](8.9,0.35);
\draw [dashed, ->] (1.4,0.35) to [out=40,in=-135](8.5,2.85);

\end{tikzpicture}
\caption{A fixed point of the correspondence.}
\end{figure}
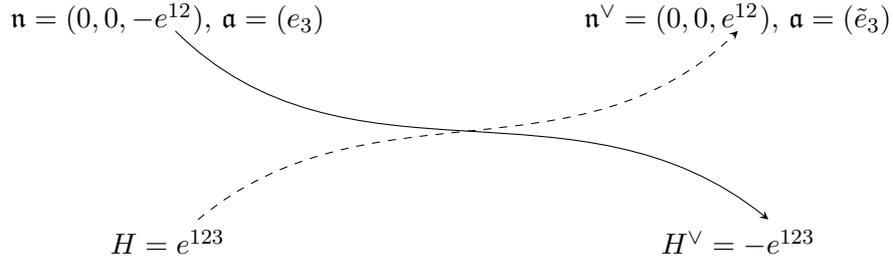
\end{center}
\end{example}

\begin{remark}
We observe that Example \ref{ex-sl3} considers again the same flag manifold of Example \ref{e1}, but with a different flux. We then observe that, while as a flowing manifold with flux $H=e^{123}$ it remains unaffected by the correspondence, when considered without flux it changes non-trivially.
\end{remark}

\begin{theorem}
Let $\mathbb{F}$ be a maximal flag manifold. There exists an invariant $3$-form $H$ such that
the flowing flag  $(\mathbb{F},H)$ is self-corresponding.
\end{theorem}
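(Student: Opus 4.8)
The plan is to exhibit, for each maximal flag $\mathbb{F}=G/T$, an invariant $3$-form $H$ whose infinitesimal $T$-dual triple (produced by Theorem~\ref{nildual}) has a nilradical \emph{isomorphic} to the original one, so that one may choose $\mathfrak{p}^\vee=\mathfrak p$ and obtain $\mathbb{F}^\vee\cong\mathbb{F}$ in the sense of Definition~\ref{dualflag}. Since a maximal flag of a semisimple group is a product of maximal flags of its simple factors, and the correspondence respects such products, I would first reduce to $\mathfrak{g}^{\mathbb C}$ simple. Here $P$ is a Borel subgroup and the associated nilpotent algebra is the full nilradical $\mn=\sum_{\gamma\in\Pi^+}\mathfrak g_\gamma$, whose center is the one-dimensional highest-root space $\mathfrak g_\mu$. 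Taking $\ma=\mathfrak g_\mu=\langle x\rangle$ with dual $1$-form $z$, the algebra $\mn$ is precisely the central extension of $\bar{\mn}:=\mn/\ma$ by the $2$-cocycle $\omega$ recording the $\ma$-component of the bracket, i.e. $\omega=-dz$. This is the structure already visible in the prototype $\mathfrak{sl}(3,\mathbb C)$ of Example~\ref{ex-sl3}.

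The mechanism I would exploit is that the dual nilradical of Theorem~\ref{nildual} is the extension $(\bar{\mn})_{\Psi^\vee}$ with $\Psi^\vee=\iota_x H$, whose isomorphism type depends only on the class of $\iota_x H$ up to $\mathrm{Aut}(\bar{\mn})$. Thus it suffices to arrange $\iota_x H=\omega$, for then $\mn^\vee\cong\mn$, one may take $\mathfrak{p}^\vee=\mathfrak p$, and $H^\vee=z\wedge dz+\delta$ certifies self-correspondence. I would use the ansatz $H=z\wedge\omega+\delta$ with $\delta$ a basic $3$-form: admissibility is automatic since $\dim\ma=1$, and $\iota_xH=\omega$ holds because $\omega$ is basic. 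Decomposing $dH$ by $z$-degree and using $d\omega=0$, closedness of $H$ collapses to the single equation $d\delta=\omega\wedge\omega$ in the Chevalley–Eilenberg complex of $\bar{\mn}$. When the highest root $\mu$ has a \emph{unique} expression as a sum of two positive roots (as for $\mathfrak{sl}(3,\mathbb C)$), $\omega$ is a single decomposable $2$-form, $\omega\wedge\omega=0$, one takes $\delta=0$, and this reproduces exactly the fixed point of Example~\ref{ex-sl3}.

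The main obstacle is the vanishing of the class $[\omega\wedge\omega]\in H^4(\bar{\mn})$, and here the naive construction is genuinely insufficient in higher rank: when $\mu$ admits several decompositions, $\omega$ is the symplectic form of the Heisenberg layer of $\mu$ and $\omega\wedge\omega$ need not be a Chevalley–Eilenberg coboundary (one checks directly for $\mathfrak{sl}(4,\mathbb C)$ that the obstruction is non-zero, and replacing $\omega$ by a cohomologous cocycle does not remove it). To overcome this I would abandon the central ideal and instead enlarge $\ma$ to a maximal abelian ideal of $\mn$ containing $\mathfrak g_\mu$, namely the nilradical of a suitable maximal (cominuscule) parabolic; passing to the general infinitesimal $T$-duality of the Definition, the additional bracket data retained in $\mn/\ma$ together with the freedom in the fibre $2$-form $F$ should absorb the obstruction while still returning $\mn^\vee\cong\mn$. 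The crux of the argument is then the explicit construction of the pair $(H,F)$ and the verification of this isomorphism, which I would carry out by organizing the computation along the Kostant cascade of $\mu$ and checking the outcome type-by-type in Cartan's classification; once $\mn^\vee\cong\mn$ is established, the self-correspondence is immediate.
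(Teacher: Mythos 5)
Your first two paragraphs reproduce, in cleaner language, exactly the construction the paper uses: take $\mathfrak{a}=\mathfrak{g}_\mu=\langle X_\mu\rangle$ for $\mu$ the highest root, let $\omega$ be the $2$-cocycle recording the bracket into $\mathfrak{g}_\mu$ (the last slot of the Malcev tuple), set $H=z\wedge\omega$ (plus a basic piece), observe that $\iota_{X_\mu}H=\omega$ so that Theorem~\ref{nildual} returns $\mathfrak{n}^\vee=(\mathfrak{n}/\mathfrak{a})_\omega\cong\mathfrak{n}$ and $H^\vee\cong H$. Up to that point you and the paper coincide, and in the case where $\mu$ has a unique decomposition as a sum of two positive roots (e.g.\ $\mathfrak{sl}(3,\mathbb{C})$, Example~\ref{ex-sl3}) both arguments are complete.

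The difference is that you then check closedness of $H$ and the paper does not, and this is where your proposal (correctly, in my view) stalls. Admissibility in the sense of the Definition requires $dH=0$, and with $H=z\wedge\omega+\delta$ one gets $dH=\pm\,\omega\wedge\omega+d\delta$, so the obstruction is the class $[\omega\wedge\omega]\in H^4(\mathfrak{n}/\mathfrak{a})$. Your claim that this class is genuinely nonzero in higher rank is right: for $\mathfrak{sl}(4,\mathbb{C})$ one has $\omega=-e^{14}+e^{35}$, so $\omega\wedge\omega=2e^{1345}$, while every exact form in the Chevalley--Eilenberg complex of $\mathfrak{n}/\mathfrak{a}=(0,0,0,-e^{12},-e^{23})$ is a combination of terms containing the index $2$; and as you note, changing $\omega$ within its cohomology class does not move $[\omega\wedge\omega]$. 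So the "naive" $H$ is not closed beyond the $\mathfrak{sl}(3)$ prototype --- which means the paper's own proof asserts admissibility of a triple whose $3$-form is not closed, a point it passes over silently. However, your proposed repair (enlarging $\mathfrak{a}$ to a maximal abelian ideal, exploiting the freedom in $F$, organizing the computation along the Kostant cascade, checking type by type) is a program, not an argument: nothing is constructed and nothing is verified. As written, your proposal proves the theorem only in the unobstructed case and leaves the general case open; the genuine gap is that the announced general construction of $(H,F)$ with $\mathfrak{n}^\vee\cong\mathfrak{n}$ is never carried out.
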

\begin{proof}
Let $\mathbb{F} = G/P$ be a maximal flag manifold and denote by $\mathfrak{n}$ the nilradical of $\mathfrak{p}=\Lie(P)$. Denote by $\delta$ the highest root of the Lie algebra $\mathfrak{g}$ of $G$ and by $X_\delta$ the generator of the root space $\mathfrak{g}_\delta$. Fix a basis $\{ e_1, e_2, \cdots , e_l\}$ of $\mathfrak{n}$ such that $e_l = X_\delta$. Thus, using Malcev's notation we can write $X_\delta = \sum_{i<j}h_{ij}e^{ij}$. By choosing $\mathfrak{a}=\langle e_l \rangle$ and $H = \left( \sum_{i<j}h_{ij}e^{ij} \right)\wedge e^l$ we have that $(\mathfrak{n}, \mathfrak{a}, H)$ is an admissible triple. The corresponding dual triple, satisfies $\tilde{e}_l = \sum_{i<j}h_{ij}e^{ij}$  and $H^\vee = \sum_{i<j}h_{ij}e^{ijl}$. So, $\mathfrak{n}^\vee \simeq \mathfrak{n}$, $\mathfrak{a}^\vee = \mathbb{R}$ and $H^\vee \simeq H$, therefore $(\mathfrak{n}, \mathfrak{a}, H)$ is selfdual.  
\end{proof}

We observe that a good choice of the abelian ideal $\mathfrak{a}$ is important for the correspondence. If $\mathfrak{a}$ turns out to be too small to provide the corresponding flowing flag, we try again with another choice of $\mathfrak{a}$. 

\begin{example}(Choosing  $\mathfrak{a}$ appropriately) This example illustrates the issue that a
correct choice of $\mathfrak a$ is essential for the correspondence.

Consider the maximal flag manifold $\mathbb{F}$ of $G^\mathbb{C} = SL(4,\mathbb{C})$. The parabolic subalgebra $\mathfrak{p}$ associated to $\mathbb{F}$ is the respective Borel subalgebra, and the nilradical of $\mathfrak{p}$ is given by $\mathfrak{n} = \sum_{\alpha\in \Pi^+} \mathfrak{g}_\alpha$. Using Malcev's notation we have
\[
\mathfrak{n} = (0,0,0, -e^{12}, -e^{23}, -e^{14}+e^{35}).
\]
Let $\mathfrak{a} = \langle e_6\rangle$ and $H = 0$, then we have
\[
\mathfrak{n}^\vee = (0,0,0,-e^{12},-e^{23},0), \ \mathfrak{a}^\vee = \mathbb{R} \ \textnormal{ and } \ H^\vee = -e^{146}+e^{356}.
\]
However, observe that there is no parabolic subalgebra $\mathfrak{p}^\vee$ such that its nilradical coincides with $\mathfrak{n}^\vee$. 
Indeed,  if  such a parabolic subalgebra $\mathfrak{p}^\vee$ existed, then the associated flag manifold $\mathbb{F}^\vee$ would 
have only two isotropy components, which is impossible. Therefore, we need a larger $\mathfrak{a}$ for the correspondence applied to $(\mathbb{F},0)$.

Now we try again choosing $\mathfrak{a} = \langle e_4,e_5,e_6\rangle$, and we obtain 
\[
\mathfrak{n}^\vee = (0,0,0,0,0,0), \ \mathfrak{a}^\vee = \mathbb{R}^3 \ \textnormal{ and } \ H^\vee = -e^{124}-e^{235}-e^{146}+e^{356}.
\]

{\small
\begin{center}
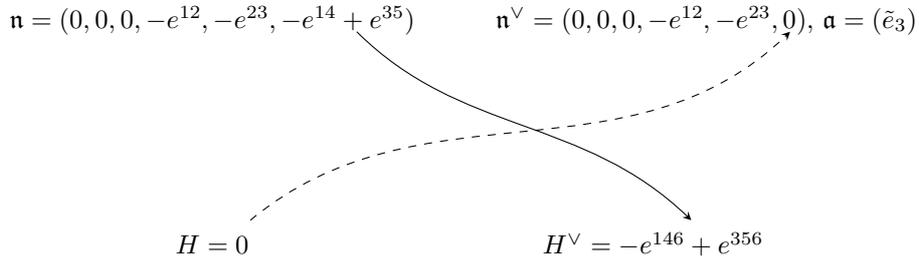
\begin{figure}[H]
\begin{tikzpicture}
\node at (1,0) {$H=0$};
\node at (6.8,0) {$H^\vee= -e^{146}+e^{356}$};
\node at (1,3) {$\mn=(0,0,0, -e^{12}, -e^{23}, -e^{14}+e^{35}) $};
\node at (7.5,3) {$\mn^\vee=(0,0,0,-e^{12},-e^{23},0),\, \ma =(\tilde e_3) $};
\draw [->] (2.9,2.85) to [out=-45, in=135](7.3,0.35);
\draw [dashed, ->] (1.5,0.35) to [out=40,in=-135](8.6,2.85);

\end{tikzpicture}
\caption{Appropriate choice of $\mathfrak{a}$.}
\end{figure}
\end{center}}
Since $\mathfrak{n}^\vee$ is abelian, it follows that 
\[
\mathbb{F}^\vee = SL(7,\mathbb{C})/P_\Theta \simeq SU(7)/S(U(6)\times U(1))
\] 
where $\Theta = \Sigma \backslash \{ \alpha_6\}$. So that $(\mathbb{F}^\vee,H^\vee)$ is the desired corresponding flag.
\end{example}

\begin{example}(Partial flag)
Consider the Lie algebra $\mathfrak{g}^\mathbb{C} = \mathfrak{sl}(6,\mathbb{C})$ and denote by $\Sigma = \{ \alpha_1,\alpha_2, \alpha_3, \alpha_4, \alpha_5\}$ its simple root system. Choosing the subset 
$\Theta = \{ \alpha_1, \alpha_3, \alpha_5\}$  of $\Sigma$,  we may construct the flag manifold 
$$\mathbb{F} = G^\mathbb{C}/P_{\Theta} \simeq SU(6)/S(U(2)\times U(2) \times U(2)).$$ 
The nilpotent algebra $\mathfrak{n}$ associated to $\mathbb{F}$ can be decomposed into three components, namely $\mathfrak{m}_i = \sum_{\alpha\in R_i}\mathfrak{g}_\alpha$ with $i \in \{1,2,3\}$, where 
$$R_1 = \{ \alpha \in \Pi^+ _M \ | \ \htt( \alpha_2) = 1 \textnormal{ and } \htt( \alpha_4 ) = 0\},$$ $$R_2 = \{ \alpha \in \Pi^+ _M \ | \ \htt( \alpha_2) = 0 \textnormal{ and } \htt( \alpha_4 ) = 1\},$$ and $$R_3 = \{ \alpha \in \Pi^+ _M \ | \ \htt( \alpha_2) = 1 \textnormal{ and } \htt( \alpha_4 ) = 1\}.$$ Setting 
$$\mathfrak m_1= \langle e_1,e_2,e_3,e_4\rangle,\mathfrak m_2= \langle e_5,e_6,e_7,e_8\rangle,
\mathfrak m_3= \langle e_9,e_{10},e_{11},e_{12}\rangle,$$ and using Malcev's notation 
we can write $$\mathfrak{n} = (0,0,0,0,0,0,0,0, e^{15}-e^{36}, e^{17}-e^{38}, e^{46}-e^{25}, e^{48}-e^{27}).$$ 
Since $[\mathfrak{m}_1, \mathfrak{m}_2] \subseteq \mathfrak{m}_3$ then $\mathfrak{a} = \mathfrak{m}_3$ is a central ideal of $\mathfrak{n}$, it follows that
 $(\mathfrak{n},\mathfrak{a}, H=0)$ is an admissible triple. By Theorem \ref{nildual} the dual triple $(\mathfrak{n}^\vee, \mathfrak{a}^\vee, H^\vee)$ is given by $\mathfrak{n}^\vee = (0,0,0,0,0,0,0,0,0,0,0,0)$, $\mathfrak{a}^\vee = \mathbb{R}^4$ and 
$$H^\vee = e^{159} - e^{369} + e^{17(10)} - e^{38(10)} + e^{46(11)} - e^{25(11)} + e^{48(12)} - e^{27(12)}.$$ Note that $\mathfrak{n}^\vee$ is an abelian algebra and it defines the flag manifold 
$$\mathbb{F}^\vee = SL(13,\mathbb{C})/P_\Theta \simeq SU(13)/S(U(12)\times U(1)) \simeq \mathbb{CP}^{12}$$ where $\Theta = \Sigma \backslash \{\alpha_{12} \}$. 
 Here we obtain the infinitesimal duality:
 
{\footnotesize
\begin{center}
\begin{figure}[H]
\begin{tikzpicture}
\node at (-3.1,0) {$H=0$};
\node at (4,0) {$H^\vee= \!e^{159} - e^{369} + e^{17(10)} - e^{38(10)} + e^{46(11)} - e^{25(11)} + e^{48(12)} - e^{27(12)}$};
\node at (0.8,3) {$\mn=(0,\ldots,0, e^{15}-e^{36}, e^{17}-e^{38}, e^{46}-e^{25}, e^{48}-e^{27}),\mathfrak{a}=\mathfrak m_3$};
\node at (7.8,3) {$\mn^\vee=(0),\, \ma^\vee =\mathbb R^4 $};
\draw [->] (-1,2.85) to [out=-45, in=140](0.5,0.35);
\draw [->] (0.55,2.85) to [out=-35,in=150] (2.7,0.35);
\draw [->] (1.9,2.85) to [out=-25,in=145] (5.3,0.35);
\draw [->] (3.25,2.85) to [out=-25,in=135] (7.9,0.35);
\draw [dashed, ->] (-2.65,0.35) to [out=35,in=-135](7.5,2.85);

\end{tikzpicture}
\caption{Correspondence  $\mathbb{F}=\frac{SU(6)}{S(U(2)\times U(2) \times U(2))}$ to $\mathbb{CP}^{12}$.}
\end{figure}
\end{center}}
 
\end{example}

The previous examples generalize  to show that a flag manifold with three isotropy summands
equipped with $H=0$ corresponds to $\mathbb{CP}^r$ where $r$ is the dimension of $\mathbb{F}$. \\

\noindent {\bf Flag manifolds with three isotropy summands:}\label{sec:flag3somandos} These are generalized flag manifolds whose  isotropy representation decomposes into three pairwise
non-isomorphic irreducible components, i.e., $\mathfrak{m}=\mathfrak{m}_{1}\oplus\mathfrak{m}_{2}\oplus\mathfrak{m}_{3}$.
They are classified by Kimura in \cite{K} and described in terms of a choice of $\Theta\subset\Sigma$ as in Table \ref{table:3compo-sigma}.

\begin{table}[!htb] 

\centering

\renewcommand*{\arraystretch}{1.2}
\begin{tabular}{c|c} 


Type & $\Theta \subset \Sigma$\\ 
\hline

I & $\Theta=\Sigma\setminus\{\alpha_{p}:\htt(\alpha_{p})=3\}$\\
II & $\Theta=\Sigma\setminus \{\alpha_{p},\alpha_{q} : \htt(\alpha_{p})=\htt(\alpha_{q})=1\}$\\
\end{tabular}
\label{tab2}
\caption{Types of flag manifolds with three summands}\label{table:3compo-sigma} 

\end{table}

There exist exactly 7 flag manifolds with three summands of type I; they are quotients of the 
exceptional groups $G_2$, $F_4$, $E_6,E_7,E_8$, see \cite{K}. Here we will concentrate on those 
that are of type II instead; these are infinitely many. Denoting by  $d_{i}=\dim(\mathfrak{m}_{i})$
the dimensions of the individual components, we have:

\begin{table}[ht] 

\centering
\renewcommand*{\arraystretch}{2.15}
\begin{tabular}{c|c|c|c} 

Flag manifold & $d_{1}$ & $d_{2}$ & $d_{3}$\\

\hline
$SU(l+m+n)/S(U(l)\times U(m)\times U(n)) $ & $2mn$ & $2mp$ & $2np$\\
$SO(2l)/U(1)\times U(l-1), \quad  (l \geq 4)$ & $2(l-1)$ & $2(l-1)$ & $(l-1)(l-2)$\\
$E_{6}/SO(8)\times U(1)\times U(1)$& 16 & 16 & 16\\

\end{tabular}
\caption{Flag manifolds with three summands of type II}\label{table:3comp-tipo2} 
\label{tab2}
\end{table}\FloatBarrier

\begin{theorem}
Let $\mathbb{F} = SL(d,\mathbb{C})/P_\Theta$ be a flag manifold with three isotropy summands of dimensions $d_1, d_2$ and $d_3$ without flux. The corresponding flowing flag is $\mathbb{CP}^{d_1 + d_2 +d_3}$ equipped with a nontrivial flux $H^\vee$.
\end{theorem}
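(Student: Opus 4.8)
The plan is to reduce the statement to the bracket structure of the nilradical $\mathfrak n$ of $\mathfrak p = \Lie(P_\Theta)$ and then feed this into Theorem~\ref{nildual}. Since the three-summand flags of type I are quotients of exceptional groups, any $SL(d,\mathbb C)/P_\Theta$ with three isotropy summands is of type II, so by Table~\ref{table:3compo-sigma} we have $\Theta=\Sigma\setminus\{\alpha_p,\alpha_q\}$ with $\alpha_p,\alpha_q$ each occurring with coefficient $1$ in the highest root. First I would attach to each complementary positive root $\gamma\in\Pi_M^+$ the pair $(a,b)$ of coefficients of $\alpha_p$ and $\alpha_q$ in its expansion over $\Sigma$; these coefficients then lie in $\{0,1\}^2\setminus\{(0,0)\}$, i.e. in $\{(1,0),(0,1),(1,1)\}$, and the three summands $\mathfrak m_1,\mathfrak m_2,\mathfrak m_3$ are exactly the sums of root spaces over these three pairs. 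Since $[\mathfrak g_\gamma,\mathfrak g_{\gamma'}]\subseteq\mathfrak g_{\gamma+\gamma'}$ and these coefficients add under root addition, the only internal bracket that can land inside $\mathfrak n$ is $[\mathfrak m_1,\mathfrak m_2]\subseteq\mathfrak m_3$; every other bracket would require a pair strictly exceeding $(1,1)$, which does not occur. Recording this yields that $\mathfrak m_3$ is a central abelian ideal of $\mathfrak n$ and that $\mathfrak n/\mathfrak m_3\cong\mathfrak m_1\oplus\mathfrak m_2$ is abelian, so $\mathfrak n$ is two-step nilpotent.

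Next I would set $\mathfrak a=\mathfrak m_3$ and $H=0$; as $\mathfrak a$ is central and $H$ vanishes, $(\mathfrak n,\mathfrak a,0)$ is admissible. Applying Theorem~\ref{nildual} with $H=0$ gives $\Psi^\vee=(\iota_{x_1}H,\dots,\iota_{x_m}H)=0$, so the dual algebra $\mathfrak n^\vee=(\mathfrak n/\mathfrak a)_{\Psi^\vee}$ is the trivial central extension of the abelian algebra $\mathfrak n/\mathfrak m_3$ by $\mathfrak a^\vee=\mathbb R^{d_3}$; hence $\mathfrak n^\vee$ is abelian of dimension $d_1+d_2+d_3$. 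For the flux, the basic component $\delta$ of $H=0$ vanishes, so $H^\vee=\sum_{k=1}^{d_3}z^k\wedge dx^k$, where each $dx^k$ is the Malcev structure $2$-form of the $k$-th generator of $\mathfrak m_3$, a combination of terms $e^{ij}$ with $e_i\in\mathfrak m_1$, $e_j\in\mathfrak m_2$. Because $[\mathfrak m_1,\mathfrak m_2]=\mathfrak m_3\neq0$ the bracket is not identically zero, so at least one such $2$-form is nonzero and $H^\vee\neq0$; since $\mathfrak n^\vee$ is abelian its Chevalley--Eilenberg differential vanishes, $H^\vee$ is automatically closed, and the $z^k$ together with the surviving $e^i$ are independent, making $H^\vee$ a genuinely nontrivial invariant flux.

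Finally I would realize the abelian algebra $\mathfrak n^\vee$ of dimension $r=d_1+d_2+d_3$ as the nilradical of the maximal parabolic $\mathfrak p^\vee=\mathfrak p_{\Sigma\setminus\{\alpha_r\}}\subset\mathfrak{sl}(r+1,\mathbb C)$, whose associated flag manifold is $SL(r+1,\mathbb C)/P_{\Sigma\setminus\{\alpha_r\}}\cong\mathbb{CP}^r$. This completion is of type $A$ (the same type as $G$) and satisfies $\dim\mathbb{CP}^r=r=\dim\mathbb F$, so it fulfils all the requirements of Definition~\ref{dualflag}, and therefore $(\mathbb F,0)$ corresponds to $(\mathbb{CP}^{\,d_1+d_2+d_3},H^\vee)$ with $H^\vee\neq0$. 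The main obstacle is Step~1: the whole argument hinges on $\mathfrak n$ being two-step nilpotent with its top summand $\mathfrak m_3$ central, a feature special to the three-summand case that fails once more summands appear, as the $SL(4,\mathbb C)$ example already shows. One must also keep in mind that, by Remark~$\iota\iota$, the passage from $\mathfrak n^\vee$ to $\mathfrak p^\vee$ is not unique, so the substance of the statement is the explicit exhibition of $\mathbb{CP}^r$ as a valid completion together with the verification that the induced flux $H^\vee$ does not vanish.
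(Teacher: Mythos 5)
Your proposal is correct and follows essentially the same route as the paper's proof: identify the type II structure $\Theta=\Sigma\setminus\{\alpha_p,\alpha_q\}$, split $\Pi_M^+$ into the three sets according to the coefficients of $\alpha_p,\alpha_q$, observe that $[\mathfrak m_1,\mathfrak m_2]\subseteq\mathfrak m_3$ is the only nonzero bracket so that $\mathfrak a=\mathfrak m_3$ is central, and apply Theorem~\ref{nildual} with $H=0$ to obtain an abelian $\mathfrak n^\vee$ of dimension $d_1+d_2+d_3$ with $H^\vee=\sum_k z^k\wedge dx^k\neq 0$, realized as the nilradical of a maximal parabolic giving $\mathbb{CP}^{d_1+d_2+d_3}$. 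You supply slightly more detail than the paper on why the non-$\mathfrak m_3$ brackets vanish and why $H^\vee$ cannot cancel, but the argument is the same.
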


\begin{proof}
Since flag manifold $\mathbb{F} = SL(d,\mathbb{C})/P_\Theta$ has three isotropy summands, we have $\Theta=\Sigma\setminus \{\alpha_{p},\alpha_{q} : \htt(\alpha_{p})=\htt(\alpha_{q})=1\}$. If we denote $$R_1 = \{ \alpha \in \Pi^+ _M \ | \ \htt( \alpha_p) = 1 \textnormal{ and } \htt( \alpha_q ) = 0\}$$ $$R_2 = \{ \alpha \in \Pi^+ _M \ | \ \htt( \alpha_p) = 0 \textnormal{ and } \htt( \alpha_q ) = 1\}$$ $$R_3 = \{ \alpha \in \Pi^+ _M \ | \ \htt( \alpha_p) = 1 \textnormal{ and } \htt( \alpha_q ) = 1\},$$ then the isotropy summands are given by $$\mathfrak{m}_i = \sum_{\alpha \in R_i}\mathfrak{g}_\alpha$$ where $i=1,2,3$. So, it is simple to check that $[\mathfrak{m}_1,\mathfrak{m}_2]\subseteq \mathfrak{m}_3$ and the other brackets are zero. Then, using Malcev's notation we have
\[
\mathfrak{n} = (\underbrace{0,\cdots,0}_{d_1},\underbrace{0,\cdots, 0}_{d_2},\underbrace{*, \cdots, *}_{d_3}),
\]
where $d_i$ is the dimension of $\mathfrak{m}_i$. Therefore, putting $\mathfrak{a} = \mathfrak{m}_3$ and $H=0$ we have that
\[
\mathfrak{n}^\vee = (0,0,\cdots, 0 ,0), \ \mathfrak{a}^\vee = \mathbb{R}^{d_3} \ \textnormal{ and } \ H^\vee \neq 0,
\]
since $H^\vee$ is constructed from the $d_3$ nonzero coordinates of $\mathfrak{n}$; there are
$d_3$ nonzero such coordinates, so that 
according to Definition\thinspace\ref{dualflag}, $(\mathbb F^\vee, H^\vee)$ is the corresponding flowing flag of $(\mathbb F,0)$. Thus, $\mathfrak{n}^\vee$ is an abelian subalgebra of complex dimension $d_1 + d_2 + d_3$, and we conclude that
\[
\mathbb{F}^\vee = SL(d_1+d_2+d_3+1)/P ^\vee \simeq SU(d_1+d_2+d_3+1)/S(U(d_1+d_2+d_3)\times U(1)).
\]
\end{proof}

\begin{remark}
Observe that an  analogous statement does not hold for the other type II
 flag manifolds with three isotropy summands appearing
in Table\thinspace\ref{tab2}.  Indeed, for  
$\Theta=\Sigma\setminus \{\alpha_{p},\alpha_{q} : \htt(\alpha_{p})=\htt(\alpha_{q})=1\}$, the complex dimension of the flag 
 $\mathbb{F} = SO(2n,\mathbb{C})/P_\Theta$ 
 is $\frac{1}{2}(n^2+n-2)$, whereas
  the complex dimension of the Hermitian symmetric space $SO(2l)/U(1)\times U(l-1)$ is $\frac{1}{2}(l^2-l)$. 
  But, for our conjectured correspondence we would need  $$n^2+n-2 = l^2-l$$ for $l,n$ natural numbers ($n\geq 4$)
  which has no natural (or integer) solution. 

The case of the flag manifold of $E_6$ is similar. We have  complex dimension 24 for $E_6/ SO(8)\times U(1)\times U(1)$,
whereas the Hermitian symmetric space $E_6/SO(10)\times SO(2)$ has complex dimension $16$. 

\end{remark}

We now illustrate the fact that a single flowing flag may be taken to different corresponding flowing flags, 
according to the choice of dual parabolic $\mathfrak p^\vee$.

\begin{example}(Non-uniqueness)
Consider the maximal flag manifold $\mathbb{F}$ of $G^\mathbb{C} = SL(4,\mathbb{C})$. Since we are considering the maximal flag manifold, we have that the parabolic subalgebra $\mathfrak{p}$ associated to $\mathbb{F}$ is the respective Borel subalgebra. Then, the nilradical of $\mathfrak{p}$ is given by $\mathfrak{n} = \sum_{\alpha\in \Pi^+} \mathfrak{g}_\alpha$. Using Malcev's notation we have
\[
\mathfrak{n} = (0,0,0, -e^{12}, -e^{23}, -e^{14}+e^{35}).
\]
Given $(\mathbb{F},H=0)$ and considering $\mathfrak{a} = \langle e_4, e_5, e_6\rangle$, we have 
\[
\mathfrak{n}^\vee = (0,0,0,0,0,0), \ \mathfrak{a}^\vee = \mathbb{R}^3 \ \textnormal{ and } \ H^\vee = -e^{124}-e^{235}-e^{146}+e^{356}.
\]
{\footnotesize
\begin{center}
\begin{figure}[H]
\begin{tikzpicture}
\node at (0.7,0) {$H=0$};
\node at (8.2,0) {$H^\vee=  -e^{124}-e^{235}-e^{146}+e^{356}$};
\node at (1.8,3) {$\mn=(0,0,0, -e^{12}, -e^{23}, -e^{14}+e^{35}) $};
\node at (8.5,3) {$\mn^\vee=(0,0,0,0,0,0)$};
\draw [->] (1.25,2.85) to [out=-50, in=135](7.3,0.35);
\draw [->] (2.15,2.85) to [out=-50, in=133](8.2,0.35);
\draw [->] (3.5,2.85) to [out=-50, in=135](9.6,0.35);
\draw [dashed, ->] (1.1,0.35) to [out=40,in=-135](9.3,2.85);
\draw [dashed, ->] (1.1,0.35) to [out=40,in=-135](9,2.85);
\draw [dashed, ->] (1.1,0.35) to [out=40,in=-135](9.6,2.85);

\end{tikzpicture}
\caption{A single duality underlying 2 correspondences.}
\end{figure}
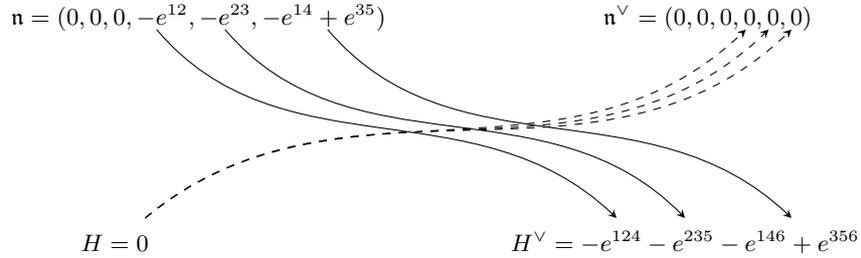
\end{center}}
Since $\mathfrak{n}^\vee$ is abelian, as a corresponding flowing flag we may choose
\[
\mathbb{F}_1 ^\vee = SL(7,\mathbb{C})/P_{\Theta_1} \simeq SU(7)/S(U(6)\times U(1))
\] 
where $\Theta_1 = \Sigma \backslash \{ \alpha_6\}$. Alternatively, we may  choose
\[
\mathbb{F}_2 ^\vee = SL(5,\mathbb{C})/P_{\Theta_2} \simeq SU(5)/S(U(3)\times U(2))
\] 
where $\Theta_2 = \Sigma \backslash \{ \alpha_3\}$. We observe that the flags 
$\mathbb{F}_1 ^\vee$ and $\mathbb{F}_2 ^\vee$ are not isomorphic, hence the 
in this example we see that diagram \ref{fig-def} may be completed by more than one choice 
of flowing flag.
\end{example}

\section{Exchanging complex and symplectic structures}

To explore how our correspondence affects generalized complex structures we will use the characterization of invariant structures obtained in \cite{VS}; which tells us that we can decompose an invariant generalized complex structure $\mathcal{J}$ on $\mathbb{F}$ in terms of the root spaces of $\mathfrak{g}$, that is, $\mathcal{J} = \bigoplus_{\alpha>0} \mathcal{J}_\alpha$. Moreover, for each root we have only two possibilities:
\begin{itemize}
\item[a)] Complex type
\[
\pm \left( \begin{array}{cccc}
0 & -1 & 0 & 0\\
1 & 0 & 0 & 0\\
0 & 0 & 0 & -1\\
0 & 0 & 1 & 0
\end{array}\right)
\]

\item[b)] Noncomplex type ($B$-transformation of symplectic type, see \cite{GVV})
\[
\left( \begin{array}{cccc}
a_\alpha & 0 & 0 & -x_\alpha\\
0 & a_\alpha & x_\alpha & 0\\
0 & -y_\alpha & -a_\alpha & 0\\
y_\alpha & 0 & 0 & -a_\alpha
\end{array}\right)
\]
where $a_\alpha ^2 -x_\alpha y_\alpha = -1$.
\end{itemize}

Given corresponding pairs of flowing flags $(\mathbb{F},H)$ and $(\mathbb{F}^\vee,H^\vee)$, we use the isomorphism $\varphi \colon T_{e} \mathbb{F} \oplus T^\ast _{e}\mathbb{F} \rightarrow T_{e'}\mathbb{F}^\vee \oplus T^\ast _{e'}\mathbb{F}^\vee$ preserving the Courant bracket and the canonical bilinear form defined by Cavalcanti and Gualtieri \cite{CG1,CG2}. 
Therefore, combining the Cavalcanti--Gualtieri map and infinitesimal $T$-duality, we transport generalized complex structures between corresponding flowing flags.

\begin{proposition}
Let $(\mathbb{F},H)$ and $(\mathbb{F}^\vee,H^\vee)$ be corresponding flowing flags. If $\mathcal{J}$ is an invariant generalized almost complex structure on $(\mathbb{F},H)$ then
\[
\widetilde{\mathcal{J}} = \varphi \circ \mathcal{J} \circ \varphi^{-1}
\]
is an invariant generalized almost complex structure on $(\mathbb{F}^\vee,H^\vee)$.
\end{proposition}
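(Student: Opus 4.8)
The plan is to reduce the statement to the two pointwise algebraic conditions that characterize a generalized almost complex structure and then transport each of them across $\varphi$. Recall that an endomorphism $\mathcal{J}$ of the generalized tangent space $T_e\mathbb{F}\oplus T^\ast_e\mathbb{F}$ is a generalized almost complex structure precisely when $\mathcal{J}^2=-\id$ and $\mathcal{J}$ is orthogonal with respect to the canonical pairing, i.e. $\langle \mathcal{J}u,\mathcal{J}v\rangle=\langle u,v\rangle$ for all $u,v$ (equivalently, $\mathcal{J}$ is skew-adjoint for this pairing). Since we are in the \emph{almost} complex case, no integrability condition involving the twisted Courant bracket enters; thus the only features of $\varphi$ I shall use are that it is a linear isomorphism and that it preserves the canonical bilinear form, both of which were recorded above following Cavalcanti--Gualtieri.

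First I would verify the square condition. Because $\varphi$ is invertible, conjugation respects composition, so the computation
\[
\widetilde{\mathcal{J}}^2=\varphi\circ\mathcal{J}\circ\varphi^{-1}\circ\varphi\circ\mathcal{J}\circ\varphi^{-1}=\varphi\circ\mathcal{J}^2\circ\varphi^{-1}=\varphi\circ(-\id)\circ\varphi^{-1}=-\id
\]
shows that $\widetilde{\mathcal{J}}$ is a complex structure on $T_{e'}\mathbb{F}^\vee\oplus T^\ast_{e'}\mathbb{F}^\vee$.

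Next I would transport orthogonality. Since $\varphi$ preserves the canonical pairing, so does $\varphi^{-1}$, and therefore for all $u,v$ the chain
\[
\langle \widetilde{\mathcal{J}}u,\widetilde{\mathcal{J}}v\rangle=\langle \varphi\mathcal{J}\varphi^{-1}u,\varphi\mathcal{J}\varphi^{-1}v\rangle=\langle \mathcal{J}\varphi^{-1}u,\mathcal{J}\varphi^{-1}v\rangle=\langle \varphi^{-1}u,\varphi^{-1}v\rangle=\langle u,v\rangle
\]
holds, where the third equality uses the orthogonality of $\mathcal{J}$. This establishes that $\widetilde{\mathcal{J}}$ is orthogonal, completing the algebraic part.

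Finally, invariance is inherited for free: by construction $\varphi$ is the Cavalcanti--Gualtieri isomorphism between the \emph{invariant} sections $(T\mathbb{F}\oplus T^\ast\mathbb{F})/T$ and $(T\mathbb{F}^\vee\oplus T^\ast\mathbb{F}^\vee)/T^\vee$, so conjugating the invariant structure $\mathcal{J}$ by $\varphi$ produces an invariant endomorphism $\widetilde{\mathcal{J}}$. I do not expect a serious obstacle; the only point demanding care is to confirm at the outset that $\varphi$ really is a pairing-preserving linear isomorphism of the generalized tangent spaces at the two origins, which is exactly the content of the construction recalled above. The Courant-bracket compatibility of $\varphi$, which would be the substantive ingredient for promoting $\widetilde{\mathcal{J}}$ from \emph{almost} complex to integrable, is not needed here.
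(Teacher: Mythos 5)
Your proposal is correct and follows essentially the same route as the paper: reduce to the fibre over the origin by invariance, conjugate by the Cavalcanti--Gualtieri map $\varphi$, and use that $\varphi$ is a pairing-preserving linear isomorphism to conclude. The paper simply asserts that $\varphi\circ J\circ\varphi^{-1}$ is again a generalized almost complex structure, whereas you spell out the verification of $\widetilde{\mathcal{J}}^2=-\id$ and of orthogonality; this is a welcome filling-in of detail rather than a different argument.
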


\begin{proof} 

Given that $(\mathbb{F},H)$ and $(\mathbb{F}^\vee,H^\vee)$ are corresponding flowing flags, by Definition \ref{dualflag}, there exist $\mathfrak{a}$ and $\mathfrak{a}^\vee$, such that $(\mathfrak{n},\mathfrak{a}, H)$ and $(\mathfrak{n}^\vee, \mathfrak{a}^\vee, H^\vee)$ are admissible triples, where $\mathfrak{n}\approx T_{b_0}\mathbb{F}$ and $\mathfrak{n}^\vee \approx T_{b_0}\mathbb{F}^\vee$.

Let $\mathcal{J}$ be an invariant generalized almost complex structure on $(\mathbb{F},H)$. As a consequence of invariance we have that $\mathcal{J}$ is completely determined by its restriction to the origin, named $J$, and $\widetilde{J} = \varphi \circ J \circ \varphi^{-1}$ is a generalized almost complex structure on $(\mathfrak{n}^\vee,H^\vee)$ which extends to define an invariant generalized almost complex structure  $\widetilde{\mathcal{J}}$ on $(\mathbb{F}^\vee,H^\vee)$.
\end{proof}

\begin{remark}
The isomorphism $\varphi$ is defined on the Lie algebra and, following \cite[Cor.\thinspace 3.9]{BGS}, if we consider basis $$\{ y_1,\cdots ,y_t, x_1,\cdots ,x_m\} \quad \textnormal{ and } \quad \{ y_1,\cdots ,y_t, z_1,\cdots ,z_m\}$$ of $\mathfrak{m}$ and $\mathfrak{m}^\vee$ where $x_1,\cdots x_t$ is a basis of $\mathfrak{a}$ and $z_1,\cdots ,z_m$ is a basis of $\mathfrak{a}^\vee$ the matrix of the isomorphism $\varphi$ has the form
\[
\varphi = \left( \begin{array}{cccc}
1_{t\times t} & 0 & 0 & 0\\
0 & 0 & 0 & -1_{m\times m}\\
0 & 0 & 1_{t\times t} & 0\\
0 & -1_{m\times m} & 0 & 0
\end{array}\right).
\]
\end{remark}
%

\begin{corollary}
Within each root space, our correspondence interchanges  types of generalized complex structures, that is, almost complex structures are exchanged with those of noncomplex type ($B$-transformations of symplectic structures).
\end{corollary}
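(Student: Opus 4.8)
The plan is to reduce the corollary to a direct matrix computation performed one root at a time, using the explicit form of the duality isomorphism $\varphi$ recorded in the Remark. By the characterization of \cite{VS} recalled above, an invariant generalized almost complex structure splits as $\mathcal{J} = \bigoplus_{\alpha>0}\mathcal{J}_\alpha$, where on the four-dimensional block attached to each positive root $\alpha$ the piece $\mathcal{J}_\alpha$ is either of complex type (a) or of noncomplex ($B$-transformed symplectic) type (b). The Proposition guarantees that $\widetilde{\mathcal{J}} = \varphi\circ\mathcal{J}\circ\varphi^{-1}$ is again an invariant generalized almost complex structure on $(\mathbb{F}^\vee,H^\vee)$, hence it too decomposes over roots and each $\widetilde{\mathcal{J}}_\alpha$ is of type (a) or (b). Since $\varphi$ respects the root decomposition, it is enough to show that conjugation by the restriction of $\varphi$ to a single root block interchanges the two types.

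The first substantive step is to identify that restriction. Writing the root block in a tangent--cotangent basis $(e_1,e_2,e^1,e^2)$ in which exactly one real tangent direction is dualized, the restriction of $\varphi$ is the $t=m=1$ instance of the matrix in the Remark: the involution fixing $e_1,e^1$ and sending $e_2\mapsto -e^2$, $e^2\mapsto -e_2$; in particular $\varphi^{-1}=\varphi$ on each block. The proof then consists of the two conjugations $\varphi\,\mathcal{J}_\alpha\,\varphi$. Multiplying out, the block-diagonal complex-type matrix (a), which has no tangent--cotangent mixing, is carried to a matrix whose off-diagonal ($x,y$) entries are nonzero and satisfy $a^2-xy=-1$, that is, to a block of type (b); conversely the type (b) matrix is carried to a block-diagonal matrix whose two diagonal $2\times 2$ blocks square to $-\mathrm{Id}$, that is, to a block of complex type (a). Thus conjugation by $\varphi$ toggles the presence of the off-diagonal symplectic mixing, which is precisely the claimed interchange.

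The main obstacle is not the arithmetic but the geometric bookkeeping that makes it meaningful: one must verify that, for the duality underlying the correspondence, each complex root is dualized in exactly one of its two real directions, so that the relevant restriction is the $t=m=1$ matrix and not the full tangent--cotangent swap (which, as a short computation shows, preserves each type separately). A secondary point is that conjugating type (b) produces a block-diagonal structure that is not literally in the rigid canonical form of (a); I would dispose of this by noting that any real endomorphism of a root line squaring to $-\mathrm{Id}$ equals the canonical complex structure $\pm\left(\begin{smallmatrix}0&-1\\1&0\end{smallmatrix}\right)$ in the natural Weyl basis of $\mathbb{F}^\vee$, so that after this identification $\widetilde{\mathcal{J}}_\alpha$ is genuinely of type (a) and the interchange holds up to $B$-transformation, as announced in the abstract.
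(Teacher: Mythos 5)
Your proposal follows essentially the same route as the paper: decompose $\mathcal{J}=\bigoplus_{\alpha>0}\mathcal{J}_\alpha$, restrict $\varphi$ to a single root block as the $t=m=1$ instance of the matrix in the Remark (which is its own inverse), and check by direct conjugation that the complex-type block goes to a type (b) block with $a_\alpha=0$ while the type (b) block goes to a block-diagonal matrix squaring to $-\mathrm{Id}$, hence of complex type. The two identification points you flag (why the restriction is the $t=m=1$ matrix, and why the resulting block-diagonal matrix counts as type (a)) are handled only implicitly in the paper as well, so your argument matches its proof in both substance and level of detail.
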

\begin{proof}
Given two corresponding flowing flags, since an invariant generalized complex structure decomposes as $\mathcal{J} = \bigoplus_{\alpha>0} \mathcal{J}_\alpha$, let us compute compute $\varphi \circ \mathcal{J}_\alpha \circ \varphi^{-1}$ where $\mathcal{J}_\alpha$ is of complex and noncomplex type. First, let us consider $\mathcal{J}_\alpha$ of complex type.
\begin{eqnarray*}
\widetilde{\mathcal{J}_\alpha} & = & \varphi \circ \mathcal{J}_\alpha \circ \varphi^{-1}\\
& = & \left( \begin{array}{cccc}
1 & 0 & 0 & 0\\
0 & 0 & 0 & -1\\
0 & 0 & 1 & 0\\
0 & -1 & 0 & 0
\end{array}\right)
\left( \begin{array}{cccc}
0 & -1 & 0 & 0\\
1 & 0 & 0 & 0\\
0 & 0 & 0 & -1\\
0 & 0 & 1 & 0
\end{array}\right)
\left( \begin{array}{cccc}
1 & 0 & 0 & 0\\
0 & 0 & 0 & -1\\
0 & 0 & 1 & 0\\
0 & -1 & 0 & 0
\end{array}\right) \\
& = & \left( \begin{array}{cccc}
0 & 0 & 0 & 1\\
0 & 0 & -1 & 0\\
0 & 1 & 0 & 0\\
-1 & 0 & 0 & 0
\end{array}\right)
\end{eqnarray*}
which is a generalized complex structure of symplectic type.

Now consider $\mathcal{J}_\alpha$ of noncomplex type. 
\begin{eqnarray*}
\widetilde{\mathcal{J}_\alpha} & = & \varphi \circ \mathcal{J} \circ \varphi^{-1}\\
& = & \left( \begin{array}{cccc}
1 & 0 & 0 & 0\\
0 & 0 & 0 & -1\\
0 & 0 & 1 & 0\\
0 & -1 & 0 & 0
\end{array}\right)
\left( \begin{array}{cccc}
a_\alpha & 0 & 0 & -x_\alpha\\
0 & a_\alpha & x_\alpha & 0\\
0 & -y_\alpha & -a_\alpha & 0\\
y_\alpha & 0 & 0 & -a_\alpha
\end{array}\right)
\left( \begin{array}{cccc}
1 & 0 & 0 & 0\\
0 & 0 & 0 & -1\\
0 & 0 & 1 & 0\\
0 & -1 & 0 & 0
\end{array}\right) \\
& = & \left( \begin{array}{cccc}
a_\alpha & x_\alpha & 0 & 0\\
-y_\alpha & -a_\alpha & 0 & 0\\
0 & 0 & -a_\alpha & y_\alpha\\
0 & 0 & -x_\alpha & a_\alpha
\end{array}\right)\\
\end{eqnarray*}
which is a generalized complex structure of complex type, since we have that $a_\alpha ^2 -x_\alpha y_\alpha = -1$. 
\end{proof}


\begin{remark}
Observe that the integrability of generalized complex structures is not necessarily preserved by $\varphi$. For a fixed irreducible isotropy component of a flag manifold, \cite{V} proved that the invariant generalized complex structures must have the same `type'. Hence, for every root space in this isotropy component we must have all $\mathcal{J}_\alpha$ of either complex or noncomplex type simultaneously. 

Now, consider the maximal flag manifold $\mathbb{F}$ of $SL(3,\mathbb{C})$. We know from Example \ref{e1} that $(\mathbb{F},0)$ and $(\mathbb{F}^\vee,H^\vee)$ are corresponding flowing flags, where $\mathbb{F}^\vee = SL(4,\mathbb{C})/P_\Theta$ with $\Theta = \{ \alpha_1,\alpha_2\}$ and $H^\vee = -e^{123}$. 

If we consider $\mathcal{J}$ an invariant generalized complex structure on $\mathbb{F}$ such that $\mathcal{J}_\alpha$ is of symplectic type and $\mathcal{J}_\beta$ and $\mathcal{J}_{\alpha +\beta}$ are of complex type, we know from \cite{VS} that $\mathcal{J}$ is integrable. But, the corresponding structure on $\mathbb{F}^\vee$, given by $\widetilde{\mathcal{J}} = \varphi \circ \mathcal{J} \circ \varphi^{-1}$, is a generalized complex structure such that $\mathcal{J}_\alpha$ is of complex type and $\mathcal{J}_\beta$ and $\mathcal{J}_{\alpha +\beta}$ are of symplectic type. Since $\mathbb{F}^\vee$ is a partial flag manifold with only one irreducible isotropy component, we would have complex and symplectic type inside the same isotropy component, thus $\widetilde{\mathcal{J}}$ is not integrable.
\end{remark}


\begin{thebibliography}{XX}


\bibitem[A]{arv01} A. Arvanitoyeorgos, {\it An introduction to Lie groups and the geometry of homogeneous spaces}, 
Student Mathematical Library, {\bf 22} AMS 2003.

\bibitem[BGS]{BGS} V. del Barco, L. Grama, L. Soriani, {\it T-duality on nilmanifolds}, J. High Energy Phys.  {\bf 05} (2018) 153.


\bibitem[BEM1]{BEM1}
  P. Bouwknegt, J. Evslin,  V. Mathai,
  \textit{$T$-duality: topology change from $H$-flux},
  \textit{Comm. Math. Phys.} \textbf{249} (2004), no. 2, 383--415.

\bibitem[BEM2]{BEM2}
  P. Bouwknegt, J. Evslin,  V. Mathai,
  \textit{Topology and $H$-flux of $T$-dual manifolds},
  \textit{Phys. Rev. Lett.} \textbf{92} (2004), no. 18, 181601, 3.

\bibitem[BHM1]{BHM1}
  P. Bouwknegt, K. Hannabuss, V. Mathai,
  \textit{$T$-duality for principal torus bundles},
  \textit{J. High Energy Phys.} \textbf{2004} (2004), no. 3, 018, 10.



\bibitem[BHM2]{BHM2}
  P. Bouwknegt, K. Hannabuss,  V. Mathai,
  \textit{$T$-duality for principal torus bundles and dimensionally reduced Gysin sequences},
  \textit{Adv. Theor. Math. Phys.} \textbf{9} (2005), no. 5, 749--773.

\bibitem[CG1]{CG1}
  G. R. Cavalcanti, M. Gualtieri,
  \textit{Generalized complex structures on nilmanifolds},
  \textit{J. Symplectic Geom.} \textbf{2} (2004), no. 3, 393--410.


\bibitem[CG2]{CG2} G. Cavalcanti, M. Gualtieri, {\it Generalized complex geometry and $T$-duality}, CRM Proc. Lecture Notes, ``A celebration of the mathematical legacy of Raoul Bott'' {\bf 50}  (2011) 341--365.	 

\bibitem[GVV]{GVV} E. Gasparim, F. Valencia, C. Varea, {\it Invariant generalized complex geometry on maximal flag manifolds and their moduli}, Journal of Geometry and Physics, {\bf 163} (2021) p. 104108.


\bibitem[G1]{G1}
  M. Gualtieri,
  \textit{Generalized Kähler geometry},
  \textit{Comm. Math. Phys.} \textbf{331} (2014), no. 1, 297--331.

\bibitem[G2]{G2}
  M. Gualtieri,
  \textit{Generalized complex geometry},
  \textit{Ann. of Math. (2)} \textbf{174} (2011), no. 1, 75--123.
  
\bibitem[H]{H} N. Hitchin, {\it Generalized Calabi-Yau manifolds}, Q. J. Math., 54 (2003) n. 3, 281--308.

\bibitem[K]{K} M. Kimura, {\it Homogeneous Einstein metrics on certain K\"ahler C-spaces}. In: Recent topics in differential and analytic geometry. Academic Press, 1990, 303--320.

\bibitem[MR]{MR}
  V. Mathai, J. Rosenberg,
  \textit{Group dualities, $T$-dualities, and twisted $K$-theory},
  \textit{J. Lond. Math. Soc. (2)} \textbf{97} (2018), no. 1, 1--23.

\bibitem[MV]{MW}
  V. Mathai, S. Wu,
  \textit{Topology and flux of $T$-dual manifolds with circle actions},
  \textit{Comm. Math. Phys.} \textbf{316} (2012), no. 1, 279--286.

\bibitem[SYZ]{SYZ}
A. Strominger, S.-T. Yau,  E. Zaslow,
\textit{Mirror symmetry is $T$-duality},
\textit{Nuclear Phys. B},
vol. 479, no. 1-2, pp. 243--259, 1996.


\bibitem[V]{V} C. Varea, {\it Invariant generalized complex structures on partial flag manifolds}, Indagationes Mathematicae, {\bf 31.4} (2020), 536--555.

\bibitem[VS]{VS} C. Varea, L. San Martin, {\it Invariant generalized complex structures on flag manifolds}, Journal of Geometry and Physics, {\bf 150} (2020) p. 103610.












\end{thebibliography}
\end{document}